\newtheorem{thm}{Theorem}
\newtheorem{lemma}[thm]{Lemma}
\theoremstyle{definition}
\theoremstyle{remark}
\title{A note on the incidence coloring of outerplanar graphs.}
\author{Maksim Maydanskiy\\ MIT \\
\texttt{maksimm@math.mit.edu}}
\begin{document}

\maketitle

\begin{abstract}
 In this note we prove that every outerplanar graph is $\Delta+2$ colorable. This is slightly stronger then an unpublished result of Wang Shudong, Ma Fangfang, Xu Jin, and Yan Lijun proving the same for 2-connected outerplanar graphs.
\end{abstract}

\section{Definitions and notations.}

A graph is \emph{outerplanar} if it can be embedded in the plane without crossing edges, in such a way that all the vertices are on the boundary of the exterior region.

An \emph{incidence} of a simple graph $G$ is a pair $(v, vw)$ of an edge $vw$ and one of its vertices. Two incidences $(v,vw)$ and $(\hat{v}, \hat{v}\hat{w})$ are \emph{adjacent} if $v=\hat{v}$, or $w=\hat{v}$ or $v=\hat{w}$. 

Following Wang, Ma, Xu, and Yan, we define \emph{$(k,l)$-incidence colorings} to be a proper colorings of incidences of a given graph $G$ with at most $k$ colors such that for any vertex $v$ of $G$ the number of colors used in coloring all incidences $(u, uv)$ is at most $l$.  This notion also appears in \cite{H}.

The maximum degree of a vertex in $G$ is denoted by $\Delta$.

Finaly, the neighbourhood $N(v)$ of a vertex $v$ is a set of all vertices adjacent to $v$ in $G$.

\section{The proof.}

\begin{thm}
Any outerplanar graph $G$ has a $(\Delta+2,2)$-incidence coloring.
\end{thm}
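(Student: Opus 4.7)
The plan is to induct on $|V(G)|$, using the fact that every outerplanar graph on at least two vertices contains a vertex of degree at most $2$. The base case $|V|=1$ is trivial. For the inductive step, I pick such a vertex $v$, apply the hypothesis to $G-v$ (outerplanar with $\Delta(G-v)\le\Delta(G)$) to get a $(\Delta(G)+2,2)$-incidence coloring, and then extend by coloring the incidences newly incident to $v$. For brevity I write $O(u)=\{c(u,uw):w\in N_{G-v}(u)\}$, of size $d_{G-v}(u)$, and $I(u)=\{c(w,wu):w\in N_{G-v}(u)\}$, of size at most $2$ by the $(k,2)$ condition; these two sets are disjoint since incidences on a common edge receive distinct colors.

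If $\deg v=1$ with neighbor $u$, the two new incidences are $(u,uv)$ and $(v,vu)$. I pick $c(u,uv)$ outside $O(u)\cup I(u)$, which has size at most $\Delta+1$ and so leaves at least one choice. Then $c(v,vu)$ must lie in $I(u)$ when $|I(u)|=2$ (to maintain the $(k,2)$ property at $u$) and must differ from $c(u,uv)$; since $c(u,uv)\notin I(u)$, either element of $I(u)$ works.

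If $\deg v=2$ with adjacent neighbors $u_1,u_2$, set $\alpha=c(u_1,u_1u_2)$ and $\beta=c(u_2,u_2u_1)$. Observe $\alpha\in O(u_1)\cap I(u_2)$, $\beta\in O(u_2)\cap I(u_1)$, and $\alpha\ne\beta$ since these are two adjacent incidences on the same edge. Put $c(v,vu_1):=\beta$ and $c(v,vu_2):=\alpha$, which preserves the $(k,2)$ property at both $u_1$ and $u_2$. Each remaining incidence $(u_i,u_iv)$ must avoid $O(u_i)\cup I(u_i)\cup\{\alpha,\beta\}$, but $\{\alpha,\beta\}\subseteq O(u_i)\cup I(u_i)$, so one effectively only avoids $O(u_i)\cup I(u_i)$, leaving at least one legal color; moreover $(u_1,u_1v)$ and $(u_2,u_2v)$ are not adjacent and so may be colored independently.

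If $\deg v=2$ with non-adjacent neighbors $u_1,u_2$, a pigeonhole count shows that when $|I(u_{3-i})|=2$, the set $I(u_{3-i})\cap(O(u_i)\cup I(u_i))$ contains at least one element, since $|O(u_i)\cup I(u_i)|\le\Delta+1$ out of $\Delta+2$ available colors. Picking $c(v,vu_{3-i})$ from this intersection preserves the $(k,2)$ property at $u_{3-i}$ and ensures each $(u_i,u_iv)$ retains a legal color. The main obstacle is the tight subcase in which both such intersections collapse to the same singleton, which would force $c(v,vu_1)=c(v,vu_2)$; I expect to resolve this either by always reducing pendants and triangle-ear degree-$2$ vertices first, so that the offending configuration cannot arise, or by locally recoloring a single incoming incidence at $u_1$ within the inductive coloring to shift $I(u_1)$ before extending.
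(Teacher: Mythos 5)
Your degree-$1$ and triangle (adjacent-neighbors) cases are sound and match the paper's Cases 1 and 3, but the third case --- a degree-$2$ vertex $v$ whose two neighbors are non-adjacent --- is a genuine gap, and you essentially admit it. Two problems: first, the pigeonhole claim is not valid as stated, since $|I(u_{3-i})\cap(O(u_i)\cup I(u_i))|\ge |I(u_{3-i})|+|O(u_i)\cup I(u_i)|-(\Delta+2)$ is only positive when $|O(u_i)\cup I(u_i)|$ equals $\Delta+1$ exactly; second, the ``tight subcase'' you flag (both intersections forcing $c(v,vu_1)=c(v,vu_2)$) is left to two unproved hopes. The first hope --- reduce pendants and triangle-ear degree-$2$ vertices first so the bad configuration never arises --- fails: take two $4$-cycles sharing an edge ($\Delta=3$); it has no degree-$1$ vertex, no triangle, and every degree-$2$ vertex has non-adjacent neighbors, so your reduction must enter exactly the unresolved case. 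The second hope, ``locally recoloring a single incoming incidence at $u_1$,'' is not shown to be possible; such a recoloring has its own adjacency constraints and nothing guarantees a legal shift of $I(u_1)$ exists.

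The missing ingredient is structural, and it is precisely what the paper's lemma supplies: a connected outerplanar graph with no degree-$1$ vertex always contains either two \emph{adjacent} degree-$2$ vertices, or a degree-$2$ vertex lying in a triangle, or a degree-$2$ vertex $u$ such that $G-u$ is disconnected (proved by completing the outer boundary to a cycle and looking at a leaf face of the weak dual). The two configurations you lack are exactly the ones that tame the non-adjacent-neighbor situation: with two adjacent degree-$2$ vertices there are enough free colors ($\Delta\ge 3$ gives at least $5$) to finish by a short case split, and when $G-u$ is disconnected one is allowed to permute the colors on the whole component containing one neighbor, which is a global recoloring rather than a local extension --- a sign that your purely local extension strategy cannot be pushed through as written. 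To repair your proof you would need either to prove this structural lemma (or an equivalent) and add the corresponding two coloring arguments, or to prove a genuinely stronger extendability statement about colorings of $G-v$, which you have not done.
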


\begin{proof}
It suffices to prove the theorem for connected graphs. We will need the following lemma.

\begin{lemma}
For every connected simple outerplanar graph $G$ at least one of the following holds:

Case 1: $G$ has a vertex of dergree 1.

Case 2: $G$ has two adjacent vertices of degree 2.

Case 3: $G$ has a vertex $u$ of degree 2 with $N(u)=(v,w)$ and $vw \in G$.

Case 4: $G$ contains a vertex $u$ of degre 2 with $G-u$ disconnected.  
 \end{lemma}
 
 The proof is based on the proof of Proposition 7.1.15 in (\cite{W}, p.254). 
 
 \begin{proof}

  Suppose $G$ has no vertex of degree 1.
 
 The following procedure exibits $G$ as a subgrpah of an outerplanar graph $H$ such that the bounadry of the unbounded face of $H$ is a cycle, i.e. a 2-connected outerplanar graph:

  If boundary of $G$ is not a cycle then it is a walk that visits some vertex $u$ twice. If $\ldots, v,u,w \ldots$ is such a visit we add the edge $vw$. We continue in this way untill we get to $H$.

Now the weak dual of $H$ is a tree and its leaves correspond to faces with exactly one internal edge. Take one such face $F$ with the internal edge $e=ab$. 

Case A) There are at least 4 edges in the boundary of $F$. Then there are 2 adjacent vertices $u,v$ on the boundary of $F$ different from $a, b$. Both of these are of degree 2 in $H$, so of degree at most $2$ in $G$. Since $G$ is connected and has no degree 1 vertices, they are both of degree 2. This is Case 2 of the lemma.

Case B) There are 3 edges in the boundary of $F$. Denote the vertex not on the edge $e$ by $u$. Again $u$ is of dgree 2 in $H$, hence also in $G$. If $e$ is in $G$ we are in Case 3 of the lemma.

If $e$ is not in $G$ then it was added in passing from $G$ to $H$, wich means that $v$ was traversed twice in the walk of the unbounded face of $G$. Then $G-u$ is disconnected, we are in Case 4.   
 
 \end{proof}

 We shall now prove the theorem by induction on order of $G$. If $\Delta =2$ it is obvious, so we assume $\Delta \geq 3$. Note that the case $\Delta=3$ follows from \cite{M}, but the resulting simplification in the proof is minor, and we prefer to keep the argument self-contained.
 We now have four cases, corresponding to the cases in the lemma:
 
 Case 1: The graph $G$ has a vertex $u$ of degree $1$. Let's denote the vertex adjacent to $u$ by $v$. Then $G^*= G - u$ is an outerplanar graph of smaller order and maximum degree at most $\Delta$. Hence by induction hypothesis $G^*$ can be $(\Delta+2,2)$-incidence colored by a coloring $\sigma^*$. We extend it to a coloring $\sigma$ of $G$. The degree of $v$ in $G^*$ is at most $\Delta-1$, so there are at most $\Delta-1$ colors used by incidences $(v, vw)$ outgoing from $v$, and at most $2$ used by the incidences $(w, wv)$ incoming into $v$. Hence there is at least one color left to color $(v, vu)$. The incidence $(u,uv)$ can be colored by one of the colors incoming into $v$. 
 
 Case 2: The graph $G$ has two adjacent vertices $u, v$ of degree $2$. Denote the other vertex adjacent to $u$ by $w$, the one adjacent to $v$ by $x$. Consider $G^* = G-u$. Again, $G$ is outerplanar, has smaller order and maximum degree at most $\Delta$ and so can be $(\Delta+2,2)$-incidence colored by a coloring $\sigma^*$. 
 
 Degree of $w$ in $G^*$ is at most $\Delta-1$, so there is at least one color $\alpha$ available to color $(w, wu)$. One of the incoming colors of $w$ can be used to color $(u, uw)$. Now we need to color $(u,uv)$ and $(v, vu)$. There are at most 4 prohibited colors and at least 5 available (as $\Delta \geq 3)$. If the color of $(w,wu)$ or $(u ,uw)$ is the same as the color of $(x, xv)$ then there are at most 3 prohibited colors, and we can use 2 remaining ones to finish the coloring. If all $(w,wu)$, $(u ,uw)$ and $(x, xv)$ have distinct colors, we can use the color of $(x, xv)$ to color $(u, uv)$, and have a color left to finish coloring $(v,vu)$.  Resulting coloring is in fact a $(\Delta+2, 2)$ coloring.
 
Case 3: The graph $G$ has a vertex $u$ of degree $2$ with $N(u)=(v,w)$ and $vw \in G$. Consider $G^* = G-u$. Again, $G$ is outerplanar, has smaller order and maximum degree at most $\Delta$ and so can be $(\Delta+2,2)$-incidence colored by a coloring $\sigma^*$. Suppose $(v, vw)$ is colored by color $\alpha$ and $(w, wv)$ by color $\beta$. 

We now assign color $\alpha$ to $(u, uw)$ and color $\beta$ to $(u, uv)$. This does not produce any conflicts since $\alpha$ already was an incoming color for $w$ and $\beta$ for $v$, and $\alpha \neq \beta$. Finally, the vertex $v$ has degree at most $\Delta -1$ in $G^*$ so there is at least one color $\gamma$, $\gamma \neq \alpha, \beta$, that can be used to color $(v, vu)$. Similarly, there is a color $\delta$, $\delta \neq \alpha, \beta$, that can be used to color $(w,wu)$ (it is possible that $\delta=\gamma)$. This produces a $(\Delta+2, 2)$- incidence coloring of $G$.
 
Case 4: The graph $G$ has a vertex $u$ of degree $2$ such that $G-u$ is disconnected. 

Again $G^*$ is outerplanar, of smaller order and maximal degree at most $\Delta$, hence $(\Delta+2, 2)$ colorable.

Denote $N(u)=(v,w)$. Let a $(\Delta+2, 2)$ coloring of the component of $G^*$ containing $v$ be $\sigma_1$ and a $(\Delta+2, 2)$ coloring of the component of $G^*$ containing $w$ be $\sigma_2$. Other components of $G-u$ are components of $G$, they can be $(\Delta+2,2)$ colored and left unmodified. Since degrees of $v$ and $w$ are at most $\Delta-1$ there exists a way to color incidences $(v,vu)$ by $\alpha$ and $(w, wu)$ by $\beta$, and then to assign one of the clors $\gamma$ incoming to $v$ to the incidence $(u, uv)$ and  one of the clors $\delta$ incoming to $w$ to the incidence $(u, uw)$.
 The problem is that while $\alpha \neq \gamma$ and $\beta \neq \delta$ there may be other equalities, so we get adjacent incidences at $u$ colored in the same way. However, the set of colors has at least 4 elements. Hence for any colors $\beta, \delta$ exists a permutation of colors sending $\beta, \delta$ to colors different from $\alpha, \gamma$. Composing $\sigma_2$ (together with the coloings of $(w, wu)$ and $(u, uw)$) with this permutation gives a $(\Delta+2, 2)$ coloring of $G$.

This completes the proof.
\end{proof}

\section{Questions on the incidence coloring of planar and higher-genus graphs.}
Even though not every graph is $(\Delta+2)$-colorable (c.f. \cite{G}), the counterexamples known to me are not planar.  The question of whether planar graphs are $(\Delta+2)$-colorable is unsolved. The bound of $\Delta+7$ was obtained in  \cite{H}. More generally, in the same paper it is shown that any k-degenerate graph has a $(\Delta+2k-1, k)$ incidence coloring. Any graph of positive genus $g$ has a vertex of degree  at most $d= \frac{1}{2} (7+\sqrt(1+48g)$, and hence is $d$-degenerate, producing a bound of $\Delta+6+\sqrt(1+48g)$ on the incidence coloring number. Planar graphs are 5-degenerate, and outerplanar graphs are 2-degenerate, so the resulting bounds of $\Delta+9$ and $\Delta+3$, respectively, are not optimal.  The higher-genus bounds are probably not tight either.

\end{document}